\documentclass[12pt]{article}
\usepackage[english]{babel}%
\usepackage{amssymb,amsmath,amsthm,indentfirst}
\textwidth=17cm \topmargin=0pt \headheight=0pt \headsep=0pt
\topskip=0pt \textheight=684pt
\oddsidemargin=0.5cm
\evensidemargin=0.5cm
\marginparwidth=0pt \marginparsep=0pt
%
\def\thalf{\textstyle{\frac{1}{2}}\displaystyle}
  \def\ttt{\textstyle{\frac{2}{3}}\displaystyle}

\def\ve{\varepsilon}    
\def\vk{\varkappa}
\def\la{\lambda}

            \def\vp{\varphi}
         
\def\nb{\nabla}
        \def\bB{{\bf B}}
\def\bb{{\bf b}}        
        
       \def\bF{{\bf F}}

\def\bq{{\bf q}}
\def\bu{{\bf u}}        \def\bv{{\bf v}}
\def\bw{{\bf w}}        
        
\def\bbe{\boldsymbol\beta}

\def\pa{\partial}

\def\wh{\widehat}

\def\be{\begin{equation}}       \def\ee{\end{equation}}
\def\barr{\begin{array}}        \def\earr{\end{array}}
\def\bp{\begin{proposition}}    \def\ep{\end{proposition}}
\def\bt{\begin{theorem}}        \def\et{\end{theorem}}
\def\ds{\displaystyle}     
\def\lb{\label}
\def\nn{\nonumber}
\def\lf{\left}    \def\rg{\right}
\def\leq{\le}     \def\geq{\ge}
\DeclareMathOperator{\dv}{div}
\def\leq{\leqslant}      \def\geq{\geqslant}
\newtheorem{proposition}{\indent Proposition}
\begin{document}
\begin{center}
{\LARGE{\bf On a regularization of the magnetic}}
\vskip 0.3cm
{\LARGE{\bf gas dynamics system of equations}}
\vskip 0.5cm
{\large Bernard Ducomet
\footnote{CEA, DAM, DIF, F--91297, Arpajon, France,
E-mail: \it{bernard.ducomet@cea.fr}}
{\large and Alexander Zlotnik}
\footnote{Department of Mathematics at Faculty of Economics,
National Research University Higher School of Economics,
Myasnitskaya 20, 101990 Moscow, Russia}
\footnote{Department of Mathematical Modelling,
National Research University Moscow Power Engineering Institute,
Krasnokazarmennaya 14, 111250 Moscow, {\sc Russia},
E-mail: \it{azlotnik2008@gmail.com}}}
\end{center}
\vskip 0.5cm
\begin{center}
{\it To the memory of S.D. Ustjugov.}
\end{center}
\vskip 0.5cm
\begin{abstract}
A brief derivation of a specific regularization for the magnetic gas dynamic system of equations is given in the case of general equations of gas state (in presence of a body force and a heat source). The entropy balance equation in two forms is also derived for the system.
For a constant regularization parameter and under a standard condition on the heat source, we show that the entropy production rate is nonnegative.
\end{abstract}
\par\textbf{MSC classification}: 76W05
\par\textbf{Keywords}:
magnetic gas dynamics, viscous compressible Navier-Stokes equations, regularization, entropy balance equation
\large
\section{Introduction}
Regularized (or quasi-) gas dynamic systems of equations are exploited for various purposes including the construction a class of so-called kinetically consistent finite-difference methods for gas dynamics simulations. The corresponding background, designing of the finite-difference methods and various applications are presented in monographs \cite{Ch04}-\cite{Sh09}.
\par Several new issues in the mathematical treatment of this approach have been recently developed in \cite{Z06}-\cite{Z12MMa}. In particular,
the case of a general equations of gas state (in presence of a body force and a heat source) have been covered in \cite{Z10DANa}-\cite{ZG11} where the law of non-decreasing entropy, the Petrovskii parabolicity and the linearized stability of equilibrium solutions have been established.
\par In Magneto Gas Dynamics (MGD) \cite{LL,EM}, an application of this approach has been recently given.
In \cite{EU11a,EU11b} the corresponding regularized system has been formally written in the case of a perfect polytropic gas (in absence of body forces and heat sources)
and some successful numerical results has been presented in 1D and 2D cases aimed at astrophysical applications.
\par In this paper, following a recent formalism from \cite{Z12MMa}, we give a brief complete derivation of the regularized MGD system of equations in the standard form of mass, momentum and total energy balance equations together with the Faraday equation covering the case of general gas state equations (in presence of body force and heat source).
Our formulation of the equations is more standard and seems more suitable for further discretization.
In addition, we present a useful regularized internal energy balance equation.
\par
It is well known that a crucial point in the physical and mathematical (see \cite{FN} for a complete account of this last point) correctness of a gas dynamics system is an adequate entropy balance equation, and
our main result is the derivation of such an entropy balance equation for the regularized MGD system of equations. We write it down  in two forms enlarging the recent corresponding results from \cite{Z10DANa}, \cite{Z10MMb,ZG11}, moreover,
for a constant regularization parameter and under a standard condition on the heat source, we prove that the corresponding entropy production rate is nonnegative.
\section{A regularization of the magnetic gas dynamics system of equations}
\par We begin with the classical Navier-Stokes system of equations for a viscous compressible gas flow taking into account the magnetic field, a body force and a heat source. The system consists of the mass, the impulse and the total energy balance equation together with the
Faraday equation and the equation of the absence of magnetic charge
\begin{gather}
 \pa_t\rho+\dv(\rho\bu)=0,
\lb{eq1ns}\\[1mm]
 \pa_t(\rho\bu)+\dv(\rho\bu\otimes\bu-\bB\otimes\bB)+\nb\lf(p+\thalf|\bB|^2\rg)
 =\dv\Pi_{NS}+\rho\bF,
\lb{eq2ns}\\[1mm]
 \pa_t\lf(E+\thalf|\bB|^2\rg)+\dv\lf[(E+p+|\bB|^2)\bu-(\bu\cdot\bB)\bB\rg]
\nn\\[1mm]
 =\dv(\vk\nb\theta+\Pi_{NS}\bu)+\rho\bu\cdot\bF+Q,
\lb{eq3ns}\\[1mm]
\pa_t\bB+\dv(\bu\otimes\bB-\bB\otimes\bu)=0,
\lb{eq4nsb}\\[1mm]
\dv\bB=0.
\lb{eq4nsbb}
\end{gather}
\par We consider the gas density $\rho>0$, the velocity $\bu=(u_1,\dots,u_n)$, the absolute temperature $\theta>0$ and the magnetic field strength $\bB$ as the basic unknown functions (the multiplier $\frac{1}{\sqrt{4\pi}}$ is included into the definition of $\bB$).
In addition, the equations include the total non-magnetic energy $\ds{E=\frac{1}{2}\,\rho|\bu|^2+\rho\ve}$, the pressure $p$ and the specific internal energy $\ve$. The system is considered for $(x,t)\in\Omega\times (0,T)$, where $\Omega$ is a domain in $\mathbb{R}^n$, $n\geq 1$.
\par Concerning the notation, hereafter the operators $\dv$ and $\nb=(\pa_1,\dots,\pa_n)$ are taken with respect to the spatial variables $x=(x_1,\dots,x_n)$.
Also $\pa_i$ and $\pa_t$ are the partial derivatives in $x_i$ and $t$.
The divergence of a tensor is taken with respect to its first index.
The signs
$\otimes$ and $\cdot$ denote the tensor and inner products of vectors, and in the inner products such as $\bu\cdot\nb\vp$ the sign $\cdot$ is omitted for brevity.
Also the sign $:$ means the inner product of tensors.
\par We take general state equations
\be
 p=p(\rho,\theta),\ \ \ve=\ve(\rho,\theta)
\lb{eqst}
\ee
linked by the Maxwell relation
\be
 p=\theta p_\theta+\rho^2\ve_\rho
\lb{eq4nsm}
\ee
and satisfying the thermodynamic stability conditions
\[
 p_\rho\geq 0,\ \ \ve_\theta>0.
\]
Hereafter $p_\rho$, $p_\theta$, $\ve_\rho$ and $\ve_\theta$ are partial derivatives of the state functions \eqref{eqst}.
\par In the above equations, $\Pi_{NS}$ is the classical Navier-Stokes viscous stress tensor
\[
 \Pi_{NS}=\Pi_{NS}(\bu)=\mu\lf[2\mathbb{D}(\bu)-\ttt\,(\dv\bu)\mathbb{I}\rg]
 +\la(\dv\bu)\mathbb{I},\
 \mathbb{D}(\bu)=\thalf\,(\nabla\bu+\nabla\bu^T)
\]
with the dynamic viscosity coefficient $\mu=\mu(\rho,\theta)\geq 0$ and the bulk viscosity coefficient $\lambda=\lambda(\rho,\theta)\geq 0$,  also  $\mathbb{I}$ is the identity tensor (of order $n$) and $\nabla\bu=\{\pa_iu_j\}_{i,j=1}^n$.
Moreover, $\vk=\vk(\rho,\theta)\geq 0$ is the heat conductivity coefficient,
the given functions $\bF=\bF(x,t)$ and  $Q=Q(x,t)\geq 0$ are the density of body forces and the power of heat sources.
The magnetic viscosity is neglected.
\par To regularize this system of equations, in general we follow \cite{E07,Sh09} (see also \cite{E11}) but apply the very recent simpler formalism from \cite{Z12MMa} and replace terms
\begin{gather*}
 \rho\bu,\ \
 \rho\bu\otimes\bu-\bB\otimes\bB,\ \
 \nb\lf(p+\thalf|\bB|^2\rg)-\rho\bF,
\\[1mm]
 (E+p+|\bB|^2)\bu-(\bu\cdot\bB)\bB,\ \
 \rho\bu\cdot\bF,\ \
 \bu\otimes\bB-\bB\otimes\bu
\end{gather*}
in the divergent summands in equations \eqref{eq1ns}, \eqref{eq2ns}, \eqref{eq3ns} and \eqref{eq4nsb} respectively by
\begin{gather*}
\rho\bu+\tau\hat{\pa_t}(\rho\bu),\ \ \rho\bu\otimes\bu-\bB\otimes\bB+\tau\hat{\pa_t}(\rho\bu\otimes\bu-\bB\otimes\bB),
\\[1mm]
 \nb[p+\thalf|\bB|^2+\tau\hat{\pa_t}\lf(p+\thalf|\bB|^2\rg)] -(\rho+\tau\hat{\pa_t}\rho)\bF,
\\[1mm]
 (E+p+|\bB|^2)\bu-(\bu\cdot\bB)\bB
 +\tau\hat{\pa_t}[(E+p+|\bB|^2)\bu-(\bu\cdot\bB)\bB],
\\[1mm]
 [\rho\bu+\tau\hat{\pa_t}(\rho\bu)]\cdot\bF,\ \
 \bu\otimes\bB-\bB\otimes\bu
 +\tau\hat{\pa_t}(\bu\otimes\bB-\bB\otimes\bu)
\end{gather*}
with a relaxation parameter $\tau=\tau(\rho,\ve,\bu,\bB)>0$.
Here the hat $\hat{\cdot}$ over the derivative $\pa_t$ means that it is calculated by virtue of the equations neglecting viscosity and heat conductivity (i.e., for zero $\Pi_{NS}$ and $\vk$) as for the Euler MGD system of equations.
\begin{proposition}
The regularized MGD system of equations has the form
\begin{gather}
 \pa_t\rho+\dv[\rho(\bu-\bw)]=0,
\lb{eq1h}\\[2mm]
 \pa_t(\rho\bu)+\dv[\rho(\bu-\bw)\otimes\bu-\bB\otimes\bB]+\nb\lf(p+\thalf|\bB|^2\rg)
\nn\\[1mm]
 =\dv\Pi+[\rho-\tau\dv(\rho\bu)]\bF,
\lb{eq2h}\\[2mm]
 \pa_t\lf(E+\thalf|\bB|^2\rg)+\dv\lf[(E+p)(\bu-\bw)+|\bB|^2(\bu-\wh{\bw})
 -((\bu-\wh{\bw})\cdot\bB)\bB\rg]
\nn\\[1mm]
 =\dv\lf[-\bq+\tau(\bbe\cdot\bB)\bu+\Pi\bu\rg]+\rho(\bu-\bw)\cdot\bF+Q,
\lb{eq3h}\\[2mm]
\pa_t\bB+\dv[(\bu-\wh{\bw})\otimes\bB-\bB\otimes(\bu-\wh{\bw})]
=\dv[\tau(\bu\otimes\bbe-\bbe\otimes\bu)],
\lb{eq4hb}\\[1mm]
\dv\bB=0
\lb{eq4hbb}
\end{gather}
provided that $\dv\bB|_{t=0}=0$.
\par Here the auxiliary velocity vector-functions $\bw$ and $\wh{\bw}$ are given by formulas
\begin{gather}
 \bw=\frac{\tau}{\rho}\,[\dv(\rho\bu\otimes\bu-\bB\otimes\bB)
 +\nb\lf(p+\thalf|\bB|^2\rg)-\rho\bF],
\lb{pr100}\\[1mm]
 \wh{\bw}=\frac{\tau}{\rho}\,[\rho(\bu\nb)\bu-\dv(\bB\otimes\bB)
 +\nb\lf(p+\thalf|\bB|^2\rg)-\rho\bF].
\lb{pr10}
\end{gather}
The non-symmetric regularized viscous stress tensor has the form
\begin{gather}
 \Pi=\Pi_{NS}+\rho\bu\otimes\wh{\bw}-\tau(\bbe\otimes\bB+\bB\otimes\bbe)
\nn\\[1mm]
 +\tau\lf(\bu\nb p
 +\rho C_s^2\dv\bu+\bbe\cdot\bB-\frac{p_\theta}{\rho\ve_\theta}Q\rg)\mathbb{I},
\lb{eq4h}
\end{gather}
where $C_s\geq 0$ is the speed of sound in the gas defined by a known formula
\be
 C_s^2=p_\rho+\frac{\theta p_\theta^2}{\rho^2\ve_\theta}
\lb{eq5ns}
\ee
(for example see \cite{K02}).
The regularized heat flux $\bq$ is given by a known formula
\be
 -\bq=\vk\nb\theta
 +\tau\lf[\rho\lf(\bu\nb\ve-\frac{p}{\rho^2}\,\bu\nb\rho\rg)-Q
 \rg]\bu,
\lb{eq5h}
\ee
and the auxiliary vector-function $\bbe$ has the form
\be
 \bbe=\dv(\bu\otimes\bB-\bB\otimes\bu).
\lb{eq6hb}
\ee
\end{proposition}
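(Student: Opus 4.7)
The proof is a direct but lengthy verification: substitute the hat-time-derivative replacements into \eqref{eq1ns}--\eqref{eq4nsb}, use the Euler MGD form (viz.\ $\Pi_{NS}=0$, $\vk=0$) to evaluate each $\hat{\pa_t}$, and rearrange. In that Euler form one reads off
\[
 \hat{\pa_t}\rho=-\dv(\rho\bu),\qquad \hat{\pa_t}\bB=-\bbe,
\]
\[
 \hat{\pa_t}(\rho\bu)=-\dv(\rho\bu\otimes\bu-\bB\otimes\bB)-\nb\lf(p+\thalf|\bB|^2\rg)+\rho\bF,
\]
\[
 \hat{\pa_t}\lf(E+\thalf|\bB|^2\rg)=-\dv[(E+p+|\bB|^2)\bu-(\bu\cdot\bB)\bB]+\rho\bu\cdot\bF+Q.
\]
Using the Leibniz rule $\hat{\pa_t}(\rho\bu)=\rho\,\hat{\pa_t}\bu+\bu\,\hat{\pa_t}\rho$ together with $\dv(\rho\bu\otimes\bu)=\bu\dv(\rho\bu)+\rho(\bu\nb)\bu$, the momentum identity rearranges to $\tau\hat{\pa_t}\bu=-\wh{\bw}$ with $\wh{\bw}$ exactly as in \eqref{pr10}; and $\bw$ is then defined by $\rho\bu+\tau\hat{\pa_t}(\rho\bu)=\rho(\bu-\bw)$, which is \eqref{pr100}. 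The mass equation \eqref{eq1h} and the force correction $[\rho-\tau\dv(\rho\bu)]\bF$ in \eqref{eq2h} follow immediately.

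\textbf{Thermodynamic chain rules.} Setting $\hat{D}:=\hat{\pa_t}+\bu\nb$, I apply the chain rule to $\ve=\ve(\rho,\theta)$, use continuity $\hat{D}\rho=-\rho\dv\bu$, the standard Euler internal-energy equation $\rho\hat{D}\ve+p\dv\bu=Q$ (obtained from the total-energy equation by subtracting the kinetic and magnetic energy balances, the Lorentz force doing no work on internal energy), and the Maxwell relation \eqref{eq4nsm}, to deduce $\hat{D}\theta=(Q-\theta p_\theta\dv\bu)/(\rho\ve_\theta)$. Differentiating $p=p(\rho,\theta)$ and invoking \eqref{eq5ns} then yields the key identity
\[
 \hat{\pa_t}p=-\bu\nb p-\rho C_s^2\dv\bu+\frac{p_\theta}{\rho\ve_\theta}\,Q,
\]
and similarly $\hat{\pa_t}\ve=-\bu\nb\ve+(Q-p\dv\bu)/\rho$, which is exactly the ingredient needed to assemble the regularized heat flux $\bq$ in \eqref{eq5h}.

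\textbf{Momentum, Faraday, and solenoidality.} By Leibniz, $\tau\hat{\pa_t}(\rho\bu\otimes\bu)=-\rho\bw\otimes\bu-\rho\bu\otimes\wh{\bw}$ and $\tau\hat{\pa_t}(\bB\otimes\bB)=-\tau(\bbe\otimes\bB+\bB\otimes\bbe)$. Writing $\nb\vp=\dv(\vp\mathbb{I})$ for the pressure correction and noting $\hat{\pa_t}(p+\thalf|\bB|^2)=\hat{\pa_t}p-\bbe\cdot\bB$ (from $\hat{\pa_t}\bB=-\bbe$), the isotropic slot of $\Pi$ in \eqref{eq4h} assembles itself and \eqref{eq2h} drops out. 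For Faraday, $\tau\hat{\pa_t}(\bu\otimes\bB-\bB\otimes\bu)=-\wh{\bw}\otimes\bB+\bB\otimes\wh{\bw}-\tau(\bu\otimes\bbe-\bbe\otimes\bu)$, which is precisely \eqref{eq4hb}. Finally, the tensor $\bu\otimes\bbe-\bbe\otimes\bu$ is antisymmetric, so $\dv\dv$ of it vanishes identically; hence $\dv$ of \eqref{eq4hb} gives $\pa_t(\dv\bB)=0$, and \eqref{eq4hbb} propagates from the assumed initial value.

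\textbf{Energy equation (main obstacle).} The longest step is the Leibniz expansion of $\tau\hat{\pa_t}[(E+p+|\bB|^2)\bu-(\bu\cdot\bB)\bB]$, after inserting $\hat{\pa_t}E=\hat{\pa_t}(E+\thalf|\bB|^2)+\bbe\cdot\bB$, the $\hat{\pa_t}p$ identity above, $\tau\hat{\pa_t}\bu=-\wh{\bw}$ and $\hat{\pa_t}\bB=-\bbe$. The purely advective contributions regroup into $-(E+p)\bw-|\bB|^2\wh{\bw}+(\wh{\bw}\cdot\bB)\bB$ plus a residual $\tau(\bbe\cdot\bB)\bu$; the $Q$- and $\bu\nb\theta$-terms combine with the classical $\vk\nb\theta$ to form $-\bq$ as in \eqref{eq5h}; and the remaining $\bu\nb p$, $\rho C_s^2\dv\bu$, $\bbe\cdot\bB$ and $p_\theta Q/(\rho\ve_\theta)$ pieces combine with $\Pi_{NS}\bu$ and the tensor $\rho\bu\otimes\wh{\bw}$ to produce $\Pi\bu$, yielding \eqref{eq3h}. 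The real difficulty here is bookkeeping: several non-obvious cancellations between $(\bu\cdot\bB)\bbe$- and $(\bbe\cdot\bB)\bu$-type contributions in the magnetic flux must line up so that only the single $\tau(\bbe\cdot\bB)\bu$ correction survives on the right-hand side.
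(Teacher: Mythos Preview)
Your proposal is correct and follows essentially the same route as the paper: compute each $\hat{\pa_t}$ from the Euler MGD system, identify $\tau\hat{\pa_t}(\rho\bu)=-\rho\bw$ and $\tau\hat{\pa_t}\bu=-\wh{\bw}$, derive the pressure evolution $\hat{\pa_t}p=-\bu\nb p-\rho C_s^2\dv\bu+p_\theta Q/(\rho\ve_\theta)$ via the Maxwell relation, and then Leibniz-expand each regularized flux. The paper organizes the energy computation slightly differently---it first writes $\tau\hat{\pa_t}[(E+p+|\bB|^2)\bu]$ using $\hat{\pa_t}(E+\thalf|\bB|^2)$ read directly from \eqref{eq3ns}, then expands $\nb\tfrac{E+p+|\bB|^2}{\rho}$ and isolates a magnetic residual $M=\dv(\bB\otimes\bB)\cdot\bu-\dv[(\bu\cdot\bB)\bB]+\bu\nb(\thalf|\bB|^2)-\tfrac{|\bB|^2}{\rho}\bu\nb\rho$, which it simplifies (using \eqref{pr14c} and $\dv\bB=0$) to $\bbe\cdot\bB-\tfrac{|\bB|^2}{\rho}\dv(\rho\bu)$; this is exactly the mechanism that converts the $|\bB|^2\bw$ advective piece into $|\bB|^2\wh{\bw}$ and leaves the lone $\tau(\bbe\cdot\bB)\bu$ you mention.

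One small imprecision: you write that ``$\bu\nb\theta$-terms combine with the classical $\vk\nb\theta$ to form $-\bq$,'' but in fact no $\bu\nb\theta$ term appears in the expansion---the heat-flux contribution arises directly as $\tau[\rho(\bu\nb\ve-\tfrac{p}{\rho^2}\bu\nb\rho)-Q]\bu$ from the $\nb\ve$ and $-\tfrac{p}{\rho^2}\nb\rho$ pieces of $\nb\tfrac{E+p+|\bB|^2}{\rho}$, exactly matching \eqref{eq5h} without any chain-rule detour through $\theta$. This is cosmetic and does not affect the validity of your argument.
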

\begin{proof}
By virtue of equations \eqref{eq1ns} and \eqref{eq2ns} we have
\begin{gather}
\hat{\pa_t}\rho=-\dv(\rho\bu),\ \
\label{eqE3a}\\[1mm]
\tau\hat{\pa_t}(\rho\bu)=-\rho\bw
\label{eqE3}
\end{gather}
with $\bw$ given by \eqref{pr100}.
The latter formula implies the regularized mass balance equation \eqref{eq1h}.
Also the following formula
\begin{gather}
\tau\hat{\pa_t}(\rho\bu\otimes\bu)
 =\tau\{\hat{\pa_t}(\rho\bu)\otimes\bu
 +\bu\otimes[\hat{\pa_t}(\rho\bu)-(\hat{\pa_t}\rho)\bu]\}
\nn\\[1mm]
=-\rho\bw\otimes\bu-\bu\otimes[\rho\bw-\tau
\dv(\rho\bu)\bu]
 =-\rho\bw\otimes\bu-\bu\otimes\rho\wh{\bw}
\label{eqE4}
\end{gather}
holds taking into account that
\begin{gather}
 \rho\bw=\rho\wh{\bw}+\tau\dv(\rho\bu)\bu
\label{eqE4a}
\end{gather}
with $\wh{\bw}$ given by \eqref{pr10}. Notice also the related useful formula
\begin{gather}
 \tau\hat{\pa_t}\bu
 =\frac{\tau}{\rho}\,\hat{\pa_t}(\rho\bu)-\frac{\tau}{\rho}\,(\hat{\pa_t}\rho)\bu
 =-\wh{\bw},
\label{eqE4b}
\end{gather}
see \eqref{eqE3}  and \eqref{eqE3a}.
\par The well known equation
\be
 \hat{\pa_t}p=-\lf(\bu\nabla p+\rho C_s^2\dv\bu-\frac{p_\theta}{\rho\ve_\theta}\,Q\rg)
\lb{eq4ns}
\ee
holds, where the speed of sound $C_s$ is given by \eqref{eq5ns}.
The right-hand side of the equation does not depend on $\bB$.
\par Since clearly
\begin{gather}
 \hat{\pa_t}\bB=-\bbe
\lb{eq4nsa}
\end{gather}
with $\bbe$ given by \eqref{eq6hb},
we get
\begin{gather}
 \hat{\pa_t}(\bB\otimes\bB)=-(\bbe\otimes\bB+\bB\otimes\bbe),\ \
 \hat{\pa_t}\lf(\thalf|\bB|^2\rg)=-\bbe\cdot\bB,
\lb{eq4nsc}
\end{gather}
and after recalling \eqref{eqE4b}
\begin{gather}
 \tau\hat{\pa_t}(\bu\otimes\bB-\bB\otimes\bu)
 =-[\wh{\bw}\otimes\bB-\bB\otimes\wh{\bw}+\tau(\bu\otimes\bbe-\bbe\otimes\bu)].
\lb{eq4nsd}
\end{gather}
Collecting the above formulas \eqref{eqE4}, \eqref{eq4ns} and \eqref{eq4nsc},
we derive the regularized impulse balance equation \eqref{eq2h}
with the regularized viscosity tensor in the form \eqref{eq4h}.
Also formula \eqref{eq4nsd} implies the regularized Faraday equation \eqref{eq4hb}.
\par Obviously
\[
 \dv\dv(\textbf{a}\otimes\bb-\bb\otimes\textbf{a})=0.
\]
Thus equation \eqref{eq4hb} implies that $\pa_t\dv\bB=0$ and then $\dv\bB=0$ provided that $\dv\bB|_{t=0}=0$. Notice that due to this formula also the following property holds:
\be
 \dv\bbe=0.
\lb{pr11}
\ee
\par It remains to derive the regularized energy balance equation \eqref{eq3h} that is the most cumbersome point. Following \cite{Z12MMa},
we write down
\be
 \hat{\pa_t}\lf(E+\thalf|\bB|^2\rg)=-(e-Q),
\lb{eq6ns}
\ee
where
\[
 e=\dv\lf[\lf(E+p+|\bB|^2\rg)\bu
 -(\bu\cdot\bB)\bB\rg]-\rho\bu\cdot\bF.
\]
Then
\[
 \hat{\pa_t}[(E+p+|\bB|^2)\bu]
 =[-(e-Q)+\hat{\pa_t}p+\hat{\pa_t}\bB\cdot\bB]\bu
 +\lf(E+p+|\bB|^2\rg)\hat{\pa_t}\bu.
\]
By virtue of \eqref{eq4nsa}, \eqref{eqE4b} and \eqref{eqE4a} we find
\begin{gather*}
 \tau\hat{\pa_t}\lf[\lf(E+p+|\bB|^2\rg)\bu\rg]
 =-\tau\lf\{\dv\lf[\lf(E+p+|\bB|^2\rg)\bu\rg]
 -\rho\bu\cdot\bF\rg.
\\[1mm]
 \lf.-\dv[(\bu\cdot\bB)\bB]-\hat{\pa_t}p+\bbe\cdot\bB-Q\rg\}\bu
 -\lf(E+p+|\bB|^2\rg)\lf(\bw-\frac{\tau}{\rho}\,\dv(\rho\bu)\bu\rg)
\\[1mm]
 =-\lf(E+p+|\bB|^2\rg)\bw
 -\tau\lf\{\nb\frac{E+p+|\bB|^2}{\rho}\cdot\rho\bu-\rho\bF\cdot\bu\rg.
\\[1mm]
 \lf.-\dv[(\bu\cdot\bB)\bB]-\hat{\pa_t}p+\bbe\cdot\bB-Q\rg\}\bu.
\lb{pr13}
\end{gather*}
Furthermore
\[
 \nb\frac{E+p+|\bB|^2}{\rho}
 =\nb\ve+(\nb\bu)\bu+\frac1\rho\nb\lf(p+\thalf|\bB|^2\rg)
 +\frac1\rho\nb\lf(\thalf|\bB|^2\rg)-\frac{p+|\bB|^2}{\rho^2}\nb\rho.
\]
By virtue of \eqref{eqE4b} and \eqref{eq4nsa} we also have
\[
-\tau\hat{\pa_t}[(\bu\cdot\bB)\bB]
=(\wh{\bw}\cdot\bB)\bB+(\bu\cdot\bbe)\bB+(\bu\cdot\bB)\bbe.
\]
Consequently, also subtracting and adding the term $\tau\dv(\bB\otimes\bB)\cdot\bu$, rearranging the summands and recalling formula \eqref{pr10}, we obtain
\begin{gather}
 \tau\hat{\pa_t}\lf[\lf(E+p+|\bB|^2\rg)\bu-(\bu\cdot\bB)\bB\rg]
 =-\lf(E+p+|\bB|^2\rg)\bw
\nn\\[1mm]
 +(\wh{\bw}\cdot\bB)\bB-\tau M\bu
 -\tau\lf[\rho\lf(\bu\nb\ve-\frac{p}{\rho^2}\bu\nb\rho\rg)-Q\rg]\bu
\nn\\[1mm]
 -\lf\{(\rho\wh{\bw}\cdot\bu)\bu
 -\tau[(\bu\cdot\bbe)\bB+(\bu\cdot\bB)\bbe]
 +\tau(-\hat{\pa_t}p+\bbe\cdot\bB)\bu\rg\},
\lb{pr13bis}
\end{gather}
where
\[
 M=\dv(\bB\otimes\bB)\cdot\bu-\dv[(\bu\cdot\bB)\bB]
 +\bu\nb\lf(\thalf|\bB|^2\rg)
 -\frac{|\bB|^2}{\rho}\bu\nb\rho.
\]
\par Differentiating easily leads to formulas
\begin{gather}
 \dv(\bB\otimes\bB)\cdot\bu
 =\dv[(\bu\cdot\bB)\bB]-(\bB\nb)\bu\cdot\bB,
\nn\\[1mm]
 \bu\nb\lf(\thalf|\bB|^2\rg)=(\bu\nb)\bB\cdot\bB,
\nn\\[1mm]
 \bbe=(\dv\bu)\bB+(\bu\nb)\bB-(\bB\nb)\bu
\lb{pr14c}
\end{gather}
(the last one uses the property $\dv\bB=0$). Thus
\be
 M=\bbe\cdot\bB-(\dv\bu)|\bB|^2-\frac{|\bB|^2}{\rho}\bu\nb\rho
 =\bbe\cdot\bB-\frac{|\bB|^2}{\rho}\dv(\rho\bu).
\lb{pr15}
\ee
\par Applying the formula
\[
 (\textbf{a}\otimes\bb)\bu=(\bb\cdot\bu)\textbf{a},
\]
from \eqref{eq4h} we find that
\begin{gather}
 \Pi\bu-\Pi_{NS}\bu=(\rho\wh{\bw}\cdot\bu)\bu
 -\tau[(\bu\cdot\bB)\bbe+(\bu\cdot\bbe)\bB]
\nn\\[1mm]
 +\tau\lf(\bu\nb p
 +\rho C_s^2\dv\bu-\frac{p_\theta}{\rho\ve_\theta}Q+\bbe\cdot\bB\rg)\bu.
\lb{pr17}
\end{gather}
Recalling formula \eqref{eq4ns}, we recognize that this term represent one in the curly brackets in \eqref{pr13bis}.
\par Therefore inserting formula \eqref{pr15} (and recalling \eqref{eqE4a}) and \eqref{pr17} into \eqref{pr13bis}, we obtain
\begin{gather*}
\tau\hat{\pa_t}\lf[\lf(E+p+|\bB|^2\rg)\bu-(\bu\cdot\bB)\bB\rg]
 =-(E+p)\bw-|\bB|^2\wh{\bw}
\\[1mm]
 +(\wh{\bw}\cdot\bB)\bB+\bq+\vk\nb\theta-\tau(\bbe\cdot\bB)\bu-(\Pi\bu-\Pi_{NS}\bu),
\end{gather*}
where $\bq$ is given by formula \eqref{eq5h}. This together with \eqref{eqE3} straightforwardly leads to the regularized energy balance equation \eqref{eq3h}.
\end{proof}
\par The regularized MGD system of equations \eqref{eq1h}-\eqref{eq6hb}
generalizes to the magnetic situation the quasi-gas dynamics system in the case of real gas from \cite{Z10DANa}-\cite{ZG11}.
It also generalizes the quasi-MGD system from \cite{EU11b} to the case of real gases in presence of a body force and a heat source. Notice that our system is written in another more standard form that can be essential for further discretization.
\begin{proposition}
For the regularized MGD system of equations, the following internal energy balance equation holds
\begin{gather}
 \pa_t(\rho\ve)+\dv[\rho\ve(\bu-\bw)]+p\dv(\bu-\bw)
 =\dv[-\bq+\tau(\bu\cdot\bB)\bbe]
\nn\\[1mm]
 +\Pi:\nb\bu
 +\bw\nb p
 +\lf[-\dv(\bB\otimes\bB)+\nb\lf(\thalf|\bB|^2\rg)-\rho\bF\rg]\cdot\wh{\bw}
\nn\\[1mm]
 +\tau[(\bu\nb)\bB\cdot\bbe-(\bbe\nb)\bB\cdot\bu]
 +Q.
\lb{pr21}
\end{gather}
\end{proposition}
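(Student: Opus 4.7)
The plan is to peel the kinetic and magnetic energies off the total energy balance \eqref{eq3h}, following the classical route to an internal energy equation. Since $E+\frac{1}{2}|\bB|^2=\rho\ve+\frac{1}{2}\rho|\bu|^2+\frac{1}{2}|\bB|^2$, it suffices to derive a kinetic-energy balance and a magnetic-energy balance separately and subtract both from \eqref{eq3h}.

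For the kinetic energy I would dot the regularized momentum equation \eqref{eq2h} with $\bu$ and convert $\bu\cdot\pa_t(\rho\bu)$ into $\pa_t(\frac{1}{2}\rho|\bu|^2)+\dv[\frac{1}{2}\rho(\bu-\bw)|\bu|^2]$ by means of the mass equation \eqref{eq1h}. Combined with the tensor identity $\bu\cdot\dv T=\dv(T\bu)-T:\nb\bu$ (divergence taken on the first index) and $\bu\nb(\frac{1}{2}|\bB|^2)=(\bu\nb)\bB\cdot\bB$, this produces on the left the convective kinetic term together with $\bu\nb p+(\bu\nb)\bB\cdot\bB-\dv[(\bu\cdot\bB)\bB]+(\bB\nb)\bu\cdot\bB$, and on the right $\dv(\Pi\bu)-\Pi:\nb\bu+[\rho-\tau\dv(\rho\bu)]\bu\cdot\bF$. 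For the magnetic energy I would dot the Faraday equation \eqref{eq4hb} with $\bB$; using $\dv\bB=0$, the left-hand side gives $\pa_t(\frac{1}{2}|\bB|^2)+|\bB|^2\dv(\bu-\wh{\bw})+(\bu-\wh{\bw})\nb(\frac{1}{2}|\bB|^2)-(\bB\nb)(\bu-\wh{\bw})\cdot\bB$, while the right-hand side, with the aid of $\dv\bbe=0$ from \eqref{pr11}, should regroup into $\dv[\tau(\bu\cdot\bB)\bbe]$ plus the $\tau$-commutator remainders.

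Subtracting both balances from \eqref{eq3h} and collecting terms, the convective parts reassemble into $\pa_t(\rho\ve)+\dv[\rho\ve(\bu-\bw)]+p\dv(\bu-\bw)$, with the pressure-gradient difference $(\bu-\bw)\nb p-\bu\nb p=-\bw\nb p$ migrating to the right-hand side as the stated $\bw\nb p$. The viscous block $\dv(\Pi\bu)-\bu\cdot\dv\Pi$ collapses to $\Pi:\nb\bu$. The body-force leftover $\rho(\bu-\bw)\cdot\bF-[\rho-\tau\dv(\rho\bu)]\bu\cdot\bF$ simplifies via \eqref{eqE4a} to $-\rho\wh{\bw}\cdot\bF$, contributing the $-\rho\bF\cdot\wh{\bw}$ component of the $\wh{\bw}$-bracket in \eqref{pr21}. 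The magnetic cross-terms from the subtraction, after applying the identities in \eqref{pr14c}, reorganise into the $\wh{\bw}$-contraction with the Lorentz-type force $-\dv(\bB\otimes\bB)+\nb(\frac{1}{2}|\bB|^2)$, together with the flux $\dv[\tau(\bu\cdot\bB)\bbe]$ and the commutator $\tau[(\bu\nb)\bB\cdot\bbe-(\bbe\nb)\bB\cdot\bu]$.

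The main obstacle is precisely this magnetic bookkeeping: carefully tracking all $\bB$-contractions coming from $\bu\cdot\dv(\bB\otimes\bB)$ in the kinetic balance, from the two $\bB\cdot\dv(\cdots)$ blocks in the magnetic balance, and from the $(\bbe\otimes\bB+\bB\otimes\bbe)$ and $\tau(\bbe\cdot\bB)\bu$ pieces hidden in $\Pi\bu$ and in the right-hand side of \eqref{eq3h}, and recognising which combinations collapse into the $\wh{\bw}$-Lorentz contraction and which into the $\tau$-commutator appearing in \eqref{pr21}. Once this matching is done, the residual algebra involving $p$, $\rho$, $\bw$ and $\wh{\bw}$ is routine.
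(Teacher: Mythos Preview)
Your plan is correct and is essentially the paper's own argument: the authors also subtract the momentum equation \eqref{eq2h} dotted with $\bu$ and the Faraday equation \eqref{eq4hb} dotted with $\bB$ from the total energy balance \eqref{eq3h}, use the mass equation \eqref{eq1h} to recast $\bu\cdot\pa_t(\rho\bu)+\bu\cdot\dv[\rho(\bu-\bw)\otimes\bu]$ as $\pa_t(\frac{1}{2}\rho|\bu|^2)+\dv[\frac{1}{2}\rho|\bu|^2(\bu-\bw)]$, apply \eqref{eqE4a} to the body-force leftover, and handle the magnetic bookkeeping via the identity $\dv[\tau(\bu\otimes\bbe-\bbe\otimes\bu)]\cdot\bB=\dv[\tau(\bbe\cdot\bB)\bu-\tau(\bu\cdot\bB)\bbe]-\tau[(\bu\nb)\bB\cdot\bbe-(\bbe\nb)\bB\cdot\bu]$, which is precisely what converts the flux $\tau(\bbe\cdot\bB)\bu$ in \eqref{eq3h} into the flux $\tau(\bu\cdot\bB)\bbe$ in \eqref{pr21} while producing the commutator term. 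One small remark: you do not actually need to expand the $(\bbe\otimes\bB+\bB\otimes\bbe)$ part of $\Pi$, since $\dv(\Pi\bu)-\bu\cdot\dv\Pi=\Pi:\nb\bu$ holds for the full tensor $\Pi$; and the Lorentz-type $\wh{\bw}$-contraction emerges directly from the formula $\dv(\bv\otimes\bB-\bB\otimes\bv)\cdot\bB=\dv[|\bB|^2\bv-(\bv\cdot\bB)\bB]-\bv\nb(\frac{1}{2}|\bB|^2)+\dv(\bB\otimes\bB)\cdot\bv$ with $\bv=\bu-\wh{\bw}$, rather than from \eqref{pr14c}.
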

\begin{proof}
For the regularized MGD system of equations,
we subtract the impulse balance equation \eqref{eq2h} multiplied innerly by $\bu$ and the Faraday equation \eqref{eq4hb} multiplied innerly by $\bB$ from the energy balance equation \eqref{eq3h}. Since
\[
 \pa_t(\rho\bu)\cdot\bu+\dv[\rho(\bu-\bw)\otimes\bu]
 =\pa_t\lf(\thalf\rho|\bu|^2\rg)+\dv\lf[\thalf\rho|\bu|^2(\bu-\bw)\rg]
\]
taking into account the mass balance equation \eqref{eq1h}, we get
\begin{gather}
 \pa_t(\rho\ve)+\dv[(\rho\ve+p)(\bu-\bw)+|\bB|^2(\bu-\wh{\bw})
 -((\bu-\wh{\bw})\cdot\bB)\bB]
\nn\\[1mm]
 +\dv(\bB\otimes\bB)\cdot\bu-\bu\nb\lf(p+\thalf|\bB|^2\rg)
 -\dv[(\bu-\wh{\bw})\otimes\bB-\bB\otimes(\bu-\wh{\bw})]\cdot\bB
\nn\\[1mm]
 =\dv\lf[-\bq+\tau(\bbe\cdot\bB)\bu+\Pi\bu\rg]-\dv\Pi\cdot\bu
 -\dv[\tau(\bu\otimes\bbe-\bbe\otimes\bu)]\cdot\bB
\nn\\[1mm]
 -[\rho\bw-\tau\dv(\rho\bu)\bu]\cdot\bF+Q.
\lb{pr23}
\end{gather}
\par The following formulas are valid
\begin{gather*}
 \dv(\bv\otimes\bB-\bB\otimes\bv)\cdot\bB
\\[1mm]
 =\dv\lf[|\bB|^2\bv-(\bv\cdot\bB)\bB\rg]-(\bv\nb\bB)\cdot\bB+(\bB\nb)\bB\cdot\bv
\\[1mm]
 =\dv\lf[|\bB|^2\bv-(\bv\cdot\bB)\bB\rg]-\bv\nb\lf(\thalf|\bB|^2\rg)
 +\dv(\bB\otimes\bB)\cdot\bv
\end{gather*}
since $\dv(\bB\otimes\bB)=(\bB\nb)\bB$ due to $\dv\bB=0$ and
\begin{gather*}
 \dv[\tau(\bu\otimes\bbe-\bbe\otimes\bu)]\cdot\bB
\\[1mm]
 =\dv[\tau(\bbe\cdot\bB)\bu-\tau(\bu\cdot\bB)\bbe]
 -\tau[(\bu\nb)\bB\cdot\bbe-(\bbe\nb)\bB\cdot\bu].
\end{gather*}
\par Exploiting them in \eqref{pr23} for $\bv=\bu-\wh{\bw}$, differentiating in the terms $\dv[p(\bu-\bw)]$ and $\dv(\Pi\bu)$ and applying \eqref{eqE4a}, we derive
\begin{gather*}
 \pa_t(\rho\ve)+\dv[\rho\ve(\bu-\bw)]+p\dv(\bu-\bw)-\bw\nb p
\\[1mm]
 =\dv[-\bq+\tau(\bu\cdot\bB)\bbe]+\Pi:\nb\bu
  +\lf[-\dv(\bB\otimes\bB)+\nb\lf(\thalf|\bB|^2\rg)\rg]\cdot\wh{\bw}
\\[1mm]
 +\tau[(\bu\nb)\bB\cdot\bbe-(\bbe\nb)\bB\cdot\bu]
 -\rho\wh{\bw}\cdot\bF+Q.
\end{gather*}
This equality implies the internal energy balance equation \eqref{pr21}.
\end{proof}
\par Notice that the right-hand side of equation \eqref{pr21} does not depend on $\bB$ for $\tau=0$, i.e. without the regularization (and that is the reason why the right-hand side of equation \eqref{eq4ns} does not depend on $\bB$ too).
\par The crucial point of the physical correctness of a gas dynamics system is an adequate entropy balance equation.
The entropy $s=s(\rho,\ve)$ can be introduced by the Gibbs formulas
\be
 s_\rho=-\frac{p}{\rho^2\theta},\ \
 s_\ve=\frac{1}{\theta},
\lb{eq6}
\ee
see \cite{K02}.
The next proposition generalizes to the magnetic situation the corresponding results from \cite{Z10DANa,Z10MMb,ZG11}.
\begin{proposition}
For the regularized MGD system of equations, the following entropy balance equation
holds
\begin{gather}
 \pa_t(\rho s)+\dv[\rho s(\bu-\bw)]
 =\dv\lf(-\frac{\bq}{\theta}\rg)
 +\frac{1}{\theta}\,\Xi,
\lb{pr300}
\end{gather}
where the entropy production $\ds{\frac{1}{\theta}\,\Xi}$ is expressed by a formula
\begin{gather}
\ds{\Xi=\Xi_{NS,\,0}
  +\frac{\rho}{\tau}\,|\wh{\bw}|^2
  +\frac{\tau p_\rho}{\rho}\,[\dv(\rho\bu)]^2}
  +\ds{\frac{\tau\rho\ve_\theta}{\theta}
  \lf(\frac{\theta p_\theta}{\rho\ve_\theta}\dv\bu
  +\bu\nb\theta
  -\frac{Q}{2\rho\ve_\theta}\rg)^2}
\nn\\[1mm]
  \ds{+\tau|\bbe|^2+(\bB\cdot\bu)\bbe\nb\tau
  +Q\lf(1-\frac{\tau Q}
  {4\rho\theta\ve_\theta}\rg)}
\lb{f000a}
\end{gather}
with $\ds{\frac{1}{\theta}\,\Xi_{NS,\,0}}$ being the Navier-Stokes entropy production for $Q=0$:
\begin{gather}
\ds{\Xi_{NS,\,0}=2\mu\mathbb{D}_{ij}\mathbb{D}_{ij}
 +\lf(\la-\frac{2}{3}\,\mu\rg)(\dv\bu)^2
 +\frac{\vk}{\theta}\,|\nb\theta|^2\geq 0\ \ \text{for}\ n=1,2,3.}
\lb{eq8a}
\end{gather}
\par The entropy production can be also expressed by a formula
\begin{gather}
\ds{\Xi=\Xi_{NS,\,0}
  +\frac{\rho}{\tau}\,|\wh{\bw}|^2
  +\frac{\tau}{\rho C_s^2}\lf(\rho C_s^2\dv\bu
  +\bu\nb p-\frac{p_\theta Q}{2\rho\ve_\theta}\rg)^2}
\nn\\[1mm]
\ds{+\frac{\tau\rho\theta}{c_p}\,\lf(\bu\nb s
  -\frac{Q}{2\rho\theta}\rg)^2}
  +\tau|\bbe|^2+(\bB\cdot\bu)\bbe\nb\tau
  +Q\lf(1-\frac{\tau Q}
  {4\rho\theta\ve_\theta}\rg)
\lb{f000b}
\end{gather}
provided that $p_\rho>0$. Here $c_p$ and $c_v=\ve_\theta$ are the specific heats of gas at constant pressure and at constant volume related by a formula \cite{K02}
\[
 \frac{C_s^2}{p_\rho}=\frac{c_p}{c_v}.
\]
\par Under conditions
\be
 \tau=\textrm{const},\ \ \frac{\tau Q}{4\rho\theta\ve_\theta}\leq 1,
\lb{f000}
\ee
the entropy production is nonnegative:
$\ds{\frac{1}{\theta}\,\Xi\geq 0}$ for $n=1,2,3$.
\end{proposition}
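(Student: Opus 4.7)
The plan is to start from the internal energy balance \eqref{pr21} of Proposition~2, convert it to entropy form using the Gibbs relations, and then regroup the right-hand side into four packets that collapse, after purely algebraic manipulation, to the squares in \eqref{f000a} plus the announced $Q$-residue and $\nb\tau$-term.

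First, using the Gibbs relations \eqref{eq6} and the mass balance \eqref{eq1h}, for the material derivative $D:=\pa_t+(\bu-\bw)\nb$ the identity $\theta\rho\,Ds=\rho D\ve-(p/\rho)D\rho$ combined with $D\rho=-\rho\dv(\bu-\bw)$ shows that the left-hand side of \eqref{pr21} equals $\theta[\pa_t(\rho s)+\dv(\rho s(\bu-\bw))]$. Dividing \eqref{pr21} by $\theta$ and using $-\dv\bq/\theta=\dv(-\bq/\theta)-\bq\cdot\nb\theta/\theta^2$ to extract the desired flux divergence, I would read off \eqref{pr300} with
\begin{gather*}
 \Xi=-\tfrac{1}{\theta}\bq\cdot\nb\theta+\dv[\tau(\bu\cdot\bB)\bbe]+\Pi:\nb\bu+\bw\nb p \\
 +\lf[-\dv(\bB\otimes\bB)+\nb(\thalf|\bB|^2)-\rho\bF\rg]\cdot\wh{\bw}+\tau[(\bu\nb)\bB\cdot\bbe-(\bbe\nb)\bB\cdot\bu]+Q.
\end{gather*}

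Next, inserting the explicit form \eqref{eq4h} of $\Pi$, formula \eqref{eq5h} for $-\bq$, and the splitting $\bw=\wh{\bw}+(\tau/\rho)\dv(\rho\bu)\bu$ from \eqref{eqE4a}, I would separate contributions by type. The three $\wh{\bw}$-linear terms---$\rho\wh{\bw}\cdot(\bu\nb)\bu$ from $\Pi:\nb\bu$, $\wh{\bw}\nb p$ from the split of $\bw\nb p$, and the bracketed vector of \eqref{pr21}---collapse into $\rho|\wh{\bw}|^2/\tau$, since the vector factor is exactly $\rho\wh{\bw}/\tau$ by the definition \eqref{pr10}. The magnetic terms---$\dv[\tau(\bu\cdot\bB)\bbe]$, the $\bbe,\bB$-contributions from $\Pi:\nb\bu$, and the last bracket of \eqref{pr21}---collapse, after using $\dv\bbe=0$ and the identity \eqref{pr14c} $\bbe=(\dv\bu)\bB+(\bu\nb)\bB-(\bB\nb)\bu$, into $\tau|\bbe|^2+(\bB\cdot\bu)\bbe\nb\tau$. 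The Newtonian piece $\Pi_{NS}:\nb\bu$ together with $-\vk|\nb\theta|^2/\theta$ (from the $\vk\nb\theta$ part of $-\bq\cdot\nb\theta/\theta$) gives $\Xi_{NS,\,0}$ by symmetry of $\mathbb{D}(\bu)$. The thermodynamic packet is the main technical obstacle: the remaining scalar part $\tau[\bu\nb p+\rho C_s^2\dv\bu-p_\theta Q/(\rho\ve_\theta)]\dv\bu$ of $\Pi:\nb\bu$, the residue $(\tau/\rho)\dv(\rho\bu)\bu\nb p$ from $\bw\nb p$, the $\tau\bu$-part of $-\bq\cdot\nb\theta/\theta$, and $Q$. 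Expanding $\bu\nb p$ and $\bu\nb\ve$ in $(\rho,\theta)$-partials, using the Maxwell relation \eqref{eq4nsm} to cancel all $(\bu\nb\rho)(\bu\nb\theta)$ cross-terms, and invoking $\dv(\rho\bu)=\rho\dv\bu+\bu\nb\rho$ together with the formula \eqref{eq5ns} for $C_s^2$, the algebra is forced into the two squares $(\tau p_\rho/\rho)[\dv(\rho\bu)]^2$ and $(\tau\rho\ve_\theta/\theta)[(\theta p_\theta/(\rho\ve_\theta))\dv\bu+\bu\nb\theta-Q/(2\rho\ve_\theta)]^2$, with the leftover $Q^2$-term combining with $Q$ into $Q[1-\tau Q/(4\rho\theta\ve_\theta)]$, yielding \eqref{f000a}.

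For the alternative form \eqref{f000b}, I would verify a purely algebraic identity between the two quadratic forms. Setting $X=\dv(\rho\bu)$ and $Y=(\theta p_\theta/(\rho\ve_\theta))\dv\bu+\bu\nb\theta-Q/(2\rho\ve_\theta)$, and using $\bu\nb s=(\ve_\theta/\theta)\bu\nb\theta-(p_\theta/\rho^2)\bu\nb\rho$ (from \eqref{eq6} and \eqref{eq4nsm}), direct expansion shows $\rho C_s^2\dv\bu+\bu\nb p-p_\theta Q/(2\rho\ve_\theta)=p_\rho X+p_\theta Y$ and $\bu\nb s-Q/(2\rho\theta)=-(p_\theta/\rho^2)X+(\ve_\theta/\theta)Y$. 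A short computation using $C_s^2=p_\rho+\theta p_\theta^2/(\rho^2\ve_\theta)$ and $\ve_\theta/c_p=p_\rho/C_s^2$ confirms that the cross $XY$-coefficient vanishes and the $X^2$, $Y^2$ coefficients of the two quadratic forms agree, giving \eqref{f000b}. Finally, under \eqref{f000} the constancy of $\tau$ kills $(\bB\cdot\bu)\bbe\nb\tau$, the bound $\tau Q/(4\rho\theta\ve_\theta)\leq 1$ makes the $Q$-residue nonnegative, and all remaining terms in \eqref{f000a} are nonnegative squares (using $p_\rho\geq0$, $\ve_\theta>0$, and $\mu,\la,\vk\geq0$) for $n\leq 3$, giving $\Xi\geq 0$.
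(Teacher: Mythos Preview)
Your proof is correct and follows the same route as the paper: convert the internal energy balance of Proposition~2 to entropy form via the Gibbs relations, then regroup the right-hand side into the $\wh{\bw}$-, magnetic-, Navier--Stokes-, and thermodynamic packets. The only differences are bookkeeping---the paper initially parks $\frac{\tau}{\theta}(\bB\cdot\bu)\bbe$ inside the flux divergence and reabsorbs it later via \eqref{pr35}, and defers both the thermodynamic identity \eqref{pr33} and the equivalence with \eqref{f000b} to earlier references, whereas you carry these computations out explicitly---plus a harmless sign slip in your Navier--Stokes packet (the heat-flux contribution is $+\vk|\nb\theta|^2/\theta$, not $-$).
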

\begin{proof}
Extending the argument of \cite{Z10MMb} (see also \cite{Z10DANa,ZG11}), we introduce the total time derivative
\[
 D_t\vp\equiv\pa_t(\rho\vp)+\dv[\rho\vp(\bu-\bw)]
 =\rho\pa_t\vp+\rho(\bu-\bw)\nb\vp,
\]
see the mass balance equation (\ref{eq1h}).
In a standard manner we write down
\begin{gather}
 D_ts=\frac{p}{\theta}D_t\frac{1}{\rho}+\frac1\theta D_t\ve
 =\frac{1}{\theta}\lf[p\dv(\bu-\bw)+D_t\ve\rg]
\nn\\[1mm]
 =\dv\lf[-\frac{\bq}{\theta}+\frac{\tau}{\theta}(\bB\cdot\bu)\bbe\rg]
 +\frac{1}{\theta}\,\Xi
\lb{pr30}
\end{gather}
according to the Gibbs formulas (\ref{eq6}) and the internal energy balance equation (\ref{pr21}), where
\begin{gather*}
 \Xi=\frac{1}{\theta}\lf[-\bq+\tau(\bB\cdot\bu)\bbe\rg]\nb\theta+\Pi:\nb\bu
 +\bw\nb p
\nn\\[1mm] +\lf[-\dv(\bB\otimes\bB)+\nb\lf(\thalf|\bB|^2\rg)-\rho\bF\rg]\cdot\wh{\bw}
 +\tau[(\bu\nb)\bB\cdot\bbe-(\bbe\nb)\bB\cdot\bu]
 +Q.
\end{gather*}
\par Furthermore, we have
\begin{gather*}
 \Pi:\nb\bu=\Pi_{NS}:\nb\bu+\rho(\bu\nb)\bu\cdot\wh{\bw}
 +\tau\lf(\bu\nb p +\rho C_s^2\dv\bu-\frac{p_\theta}{\rho\ve_\theta}Q\rg)\dv\bu
\\[1mm]
 +\tau\lf[-(\bbe\nb)\bu\cdot\bB-(\bB\nb)\bu\cdot\bbe+(\bbe\cdot\bB)\dv\bu\rg]
\end{gather*}
and
\[
 \frac{1}{\theta}\vk\nb\theta\cdot\nb\theta+\Pi_{NS}:\nb\bu=\Xi_{NS,\,0}.
\]
Next clearly
\begin{gather}
 \rho(\bu\nb)\bu\cdot\wh{\bw}+\bw\nb p
 +\lf[-\dv(\bB\otimes\bB)+\nb\lf(\thalf|\bB|^2\rg)-\rho\bF\rg]\cdot\wh{\bw}
\nn\\[1mm]
 =\frac{\rho}{\tau}|\wh{\bw}|^2+\frac{\tau}{\rho}\dv(\rho\bu)\bu\nb p,
\lb{pr31}
\end{gather}
see formulas \eqref{pr10} and \eqref{eqE4a}. From \cite{Z10MMb,ZG11}
the following representation is known:
\begin{gather}
 \frac{1}{\theta}\tau\lf[\rho\lf(\bu\nb\ve-\frac{p}{\rho^2}\,\bu\nb\rho\rg)-Q
  \rg]\bu\nb\theta
 +\frac{\tau}{\rho}\dv(\rho\bu)\bu\nb p
\nn\\[1mm]
 +\tau\lf(\bu\nb p +\rho C_s^2\dv\bu-\frac{p_\theta}{\rho\ve_\theta}Q\rg)\dv\bu
 +Q
\nn\\[1mm]
 =\frac{\tau p_\rho}{\rho}\,[\dv(\rho\bu)]^2
  +\frac{\tau\rho\ve_\theta}{\theta}
  \lf(\frac{\theta p_\theta}{\rho\ve_\theta}\dv\bu
  +\bu\nb\theta
  -\frac{Q}{2\rho\ve_\theta}\rg)^2
  +Q\lf(1-\frac{\tau Q}{4\rho\theta\ve_\theta}\rg).
\lb{pr33}
\end{gather}
\par It remains to collect all the other magnetic terms (containing $\bB$) with the multiplier $\tau$. We have
\begin{gather*}
 \mathcal{M}=\frac{1}{\theta}(\bB\cdot\bu)\bbe\nb\theta
 -(\bbe\nb)\bu\cdot\bB
 -(\bB\nb)\bu\cdot\bbe+(\bbe\cdot\bB)\dv\bu
\\[1mm]
 +(\bu\nb)\bB\cdot\bbe-(\bbe\nb)\bB\cdot\bu
\\[1mm]
 =\frac{1}{\theta}(\bB\cdot\bu)\bbe\nb\theta
 +[-(\bB\nb)\bu+(\dv\bu)\bB+(\bu\nb)\bB]\cdot\bbe-(\bbe\nb)(\bB\cdot\bu)
\\[1mm]
 =\frac{1}{\theta}(\bB\cdot\bu)\bbe\nb\theta
 +|\bbe|^2-\dv[(\bB\cdot\bu)\bbe]
\\[1mm]
 =|\bbe|^2-\theta\dv\frac{(\bB\cdot\bu)\bbe}{\theta},
\end{gather*}
where formula \eqref{pr14c} and property \eqref{pr11} have been applied.
Consequently
\begin{gather}
 \dv\lf[\frac{\tau}{\theta}(\bB\cdot\bu)\bbe\rg]+\frac{\tau}{\theta}\mathcal{M}
 =\frac{1}{\theta}\lf(\tau|\bbe|^2+(\bB\cdot\bu)\bbe\nb\tau\rg).
\lb{pr35}
\end{gather}
Formulas \eqref{pr30}-\eqref{pr35} imply the stated ones \eqref{pr300} and \eqref{f000a}.
\par Another representation \eqref{eq8a} for $\Xi$ also follows from \cite{Z10DANa}, \cite{Z10MMb,ZG11}.
\end{proof}
\par Notice that formulas \eqref{f000a} and \eqref{f000b} remain valid in the case
$\tau\geq 0$ provided that one rewrites the term $\frac{\rho}{\tau}\,|\wh{\bw}|^2$ as
\[
 \frac{\tau}{\rho}\,|\rho(\bu\nb)\bu-\dv(\bB\otimes\bB)
 +\nb\lf(p+\thalf|\bB|^2\rg)-\rho\bF|^2.
\]
\smallskip\par
\textbf{Acknowledgments}
\smallskip\par
The paper has been initiated during the visit
of A. Zlotnik in summer 2011 to the the D\'epar\-te\-ment de Physique
Th\'eorique et Appliqu\'ee, CEA/DAM/DIF Ile de France (Arpajon), which he thanks for hospitality.
The study is carried out by him within The National Research University Higher School of Economics' Academic Fund Program in 2012-2013, research grant No. 11-01-0051 and also under financial support of the Russian Foundation for Basic Research, project 10-01-00136.

\end{document}